\newcommand\PP{\mathbb P}
\newcommand\C{\mathbb C}
\newcommand\Z{\mathbb Z}
\newcommand{\OO}{\mathcal{O}}
\newcommand{\X}{\mathcal{X}}
\newcommand{\Y}{\mathcal{Y}}
\newcommand{\LL}{\mathcal{L}}
\renewcommand{\Y}{\mathcal{Y}}
\newcommand{\calP}{\mathcal{P}}
\newcommand\Aut{\operatorname{Aut}}
\newcommand\df{\operatorname{DF}}
\newcommand\rees{\operatorname{Rees}}
\newcommand\proj{\operatorname{Proj}}
\newcommand\init{\operatorname{in}}
\newcommand\gr{\operatorname{gr}}
\newcommand\red{\operatorname{red}}
\newcommand\HM{\operatorname{HM}}
\newcommand\Chow{\operatorname{Chow}}
\newcommand\bra{\langle}
\newcommand\ket{\rangle}
\makeatletter \@addtoreset{equation}{section} \makeatother
\newtheorem{thm}{Theorem}
\newtheorem{prop}[thm]{Proposition}
\newtheorem{lem}[thm]{Lemma}
\newtheorem{cor}[thm]{Corollary}
\newtheorem{definition}[thm]{Definition}
\newtheorem{conj}[thm]{Conjecture}
\newenvironment{rmk}{\noindent\textbf{Remark}.}{}
\newenvironment{exm}{\noindent\textbf{Example}.}{\\}
\title[]{Torus equivariant K-stability}
\author{Giulio Codogni}
\address{Giulio Codogni, EPFL, SB MATHGEOM CAG, MA B3 444 (B\^{a}timent A) Station 8, CH-105 Lausanne, Switzerland}
\email{giulio.codogni@epfl.ch}
\author{Jacopo Stoppa}
\address{SISSA, via Bonomea, 265 - 34136 Trieste ITALY}
\email{jstoppa@sissa.it}
\date{}
\begin{document}
\begin{abstract} It is conjectured that to test the K-polystability of a polarised variety it is enough to consider test-configurations which are equivariant with respect to a torus in the automorphism group. We prove partial results towards this conjecture. We also show that it would give a new proof of the K-polystability of constant scalar curvature polarised manifolds. 
\end{abstract}
\maketitle

\section{Introduction}

The Yau-Tian-Donaldson conjecture for Fano manifolds \cite{yau, tian, donaldson} 
 predicts that a smooth Fano $M$ admits a K\"ahler-Einstein metric if and only if it is K-polystable, a purely algebro-geometric condition expressed through the positivity of a certain limit of GIT weights (the Donaldson-Futaki weight or invariant). There are by now several proofs, in different degrees of generality (i.e. allowing $M$ to have mild singularities, a boundary in the MMP sense, and/or slightly modifying the notion of K-stability), using different methods.

For an arbitrary polarised manifold $(X,L)$ the most natural generalisation of a K\"ahler-Einstein metric is a constant scalar curvature K\"{a}hler (cscK) metric representing the first Chern class of $L$.  If such a metric exists, $(X,L)$ is called a cscK manifold. 

A K\"ahler-Einstein metric, or more generally a cscK metric, if it exists, can always be taken invariant under the action of a compact group of  automorphisms of $M$. From the GIT point of view, when the point whose stability we would like to investigate has a non-trivial reductive stabiliser $H$, the Hilbert-Mumford Criterion can be strengthened: it is enough to consider one-parameter subgroups which commute with $H$ \cite{Kempf}. These facts suggest the following folklore conjecture (all the notions required in the rest of this introduction will be recalled in Section \ref{backSec}.)
\begin{conj}\label{conj}
Let $(X,L)$ be a polarised variety and let $G$ be a reductive subgroup of $\Aut(X,L)$. Then $(X,L)$ is K-polystable if and only if for every $G$-equivariant test-configuration the Donaldson-Futaki invariant is greater than or equal to zero, with equality if and only if the normalisation of the test-configuration is a product.
\end{conj}

An analytic proof in the case of Fano manifolds is given in \cite{gaborCont}, relying on an alternative approach to the Yau-Tian-Donaldson conjecture.  An algebro-geometric proof in the Fano case and when $G$ is a torus is given in \cite{Chenyang_Equivariant}.

Recall that a cscK manifold has reductive automorphism group, so K-polystable varieties are expected to have a reductive automorphism group as well; this problem is studied in \cite{giulio}. Because of this it is natural to formulate Conjecture \ref{conj} just for reductive subgroups of $\Aut(X,L)$.

There is a general expectation that for the existence of a cscK metric one actually needs some enhancement of the original notion of K-stability. Quite a few different notions have been proposed. In this paper we focus on the generalisation of K-stability based on (possibly non-finitely generated) filtrations of the coordinate ring of $(X,L)$ (see Definition \ref{def:cappuccio}). This notion has been proposed by G. Sz\'ekelyhidi in \cite{gabor}, building on the work of D. Witt Nystr\"{o}m \cite{nystrom}; in \cite{ICM}, it is called $\hat{K}$-stability. In \cite{gabor}, it is shown that, given a cscK manifold $(X,L)$, if the connected component of the identity of $\Aut(X,L)$ is equal to $\C^*$, then $(X,L)$ is $\hat{K}$-stable. Importantly for us \cite{gabor} also discusses a variant of $\hat{K}$-stability which replaces the Donaldson-Futaki invariant of a filtration with the asymptotic Chow weight $\Chow_{\infty}$, and proves that the $\hat{K}$-stability result remains true for this variant (the two notions coincide when dealing with classical test-configurations, corresponding to finitely generated filtrations).

Our main result is a step towards a proof of Conjecture \ref{conj} in the general case, or possibly of a variant of Conjecture \ref{conj} in the $\hat{K}$-stability setup. 
 
\begin{thm}\label{mainThm} 
Let $(X,L)$ be a polarised variety. Fix a complex torus $T \subset \Aut(X, L)$ and let $(\X,\LL)$ be a test-configuration with Donaldson-Futaki invariant $\df(\X, \LL)$. Then we can associate to $(\X,\LL)$ a $T$-equivariant filtration $\chi$ of the coordinate ring of $(X, L)$ whose asymptotic Chow weight satisfies $\Chow_{\infty}(\chi) \leq \df(\X, \LL)$. If moreover $\chi$ is finitely generated, then it corresponds to a $T$-equivariant test-configuration which is a flat one-parameter limit of $(\X,\LL)$, and in particular has the same Donaldson-Futaki invariant and $L^2$ norm.
\end{thm}
Theorem \ref{mainThm} follows at once from Lemma \ref{familyLem}, Lemma \ref{familyLem2} and Theorem \ref{main}, proved in Section \ref{SPEC}. Theorem \ref{main} shows that given a generalised test-configuration in the sense of \cite{gabor}, corresponding to a possibly non-finitely generated filtration $\chi$, we can specialise it to a $T$-invariant filtration $\bar{\chi}$ with $\Chow_{\infty}(\bar{\chi})\leq\Chow_{\infty}(\chi)$. In the Appendix we show that non-finitely generated filtrations can actually arise in Theorem \ref{mainThm}. 

In Section \ref{CSCK} we show that Conjecture \ref{conj} combined with ideas from \cite{jacopo,gaborJ} naturally leads to a  proof that cscK manifolds are K-polystable. K-polystability of cscK manifolds is proved in \cite{berman2} using completely different methods. 

\noindent\textbf{Notation.} In this paper a polarised variety $(X,L)$ is a complex projective variety $X$ endowed with a very ample and projectively normal line bundle $L$. For the purposes of this paper one may always replace $L$ with a positive tensor power, so these assumptions are not restrictive.

\noindent\textbf{Acknowledgements.} The second author learned about the equivariance question studied in this paper from S. K. Donaldson and discussed the problem and its implications with G. Sz\'ekelyhidi on several occasions. The present work is entirely motivated by those conversations.

We are also very grateful to R. Dervan, A. Ghigi, Y. Odaka, J. Ross, R. Svaldi, R. Thomas and F. Viviani for helpful discussions related to this work.

The research leading to these results has received funding from the European Research Council under the European Union's Seventh Framework Programme (FP7/2007-2013) / ERC Grant agreement no. 307119. GC was also funded by the grant FIRB 2012 ``Moduli Spaces and Their Applications''.

\section{Some results on filtrations in finite dimensional GIT}\label{GIT}
In this section we discuss some preliminary notions in a finite dimensional GIT context.

Let $V$ be a finite dimensional complex vector space. 
Pick an increasing filtration $F = \{ F_i V \}_{i \in \Z}$ of $V$ by complex subspaces (with index set $\Z$) and a $\C^*$-action $\lambda$ on $V$. 
\begin{definition} The specialisation $\bar{F}$ of $F$ via $\lambda$ is the filtration given by
\begin{equation*}
\bar{F}_i V = \lim_{\tau\to 0} \lambda(\tau) \cdot F_iV,
\end{equation*}
where the limit is taken in the appropriate Grassmannian. 
\end{definition}
Equivalently $\bar{F}_iV$ is the subspace spanned by the vectors $\bar{v}$ as $v$ varies in $F_iV$, where $\bar{v}$ denotes the lowest weight term with respect to the action of $\lambda$. The filtration $\bar{F}$ is $\lambda$-equivariant by construction, that is each $\bar{F}_iV$ is preserved by $\lambda$.

Let $G$ be a reductive group acting on $V$, and assume that the kernel of the action is a finite group. 
\begin{definition} Let $\gamma$ be a one-parameter subgroup of $G$ acting on $V$ as above. The weight filtration of $\gamma$ is the increasing filtration $F = \{F_i V\}_{i \in \Z}$ given by 
\begin{equation*}
F_i V = \bigoplus_{j\geq -i}V_j
\end{equation*}
where $V_j$ is the weight $j$ eigenspace for the action of $\gamma$. 
\end{definition}
Let $\calP(\gamma)$ be the parabolic subgroup of $G$ associated to the one-parameter subgroup $\gamma$. By definition this is the subgroup preserving the flag $F$.

Suppose that $\lambda$ is an additional one-parameter subgroup of $G$. We wish to characterise the specialisation of the weight filtration $F$ of $\gamma$ via the action of $\lambda$. For this we recall that the intersection of parabolic subgroups $\calP(\lambda)\cap \calP(\gamma)$ contains a maximal torus $\mathcal{T}$ of $G$ (see e.g. \cite{BorelTits} Proposition 4.7). Moreover all maximal tori in a parabolic subgroup are conjugated by elements of the parabolic, hence there exists a one-parameter subgroup $\chi$ of $\mathcal{T}$ such that $\chi$ is conjugate to $\gamma$ via an element in $\calP(\gamma)$, so that the weight filtration associated to $\chi$ is still $F$. Let
\begin{equation*}
\bar{\gamma}(t) =\lim_{\tau\to 0}\lambda(\tau)\chi(t)\lambda(\tau)^{-1}.
\end{equation*}
This limit exists because $\chi$ lies in the parabolic $\calP(\lambda)$, see \cite{GIT} section 2.2.
\begin{lem}\label{spec_group}
Suppose that $F$ is the weight filtration of $\gamma$. The specialisation $\bar{F}$ of $F$ via $\lambda$ coincides with the weight filtration of $\bar{\gamma}$. It follows in particular that $\bar{F}$ is induced by a one-parameter subgroup of $G$.
\end{lem}
Note that the filtration $\bar{F}$ is uniquely defined, but $\bar{\gamma}$ is not (for example, it depends on the choice of $T$).
\begin{proof}
The key remark is that the weight $j$ eigenspace of $\lambda(\tau)\chi(t)(\lambda(\tau))^{-1}$ is $\lambda(\tau) \cdot V_j$. Now for every $v\in V$ we have
\begin{equation*}
\bar{\gamma}(t)(v)=\lim_{\tau\to 0}\lambda(\tau)\chi(t)(\lambda(\tau))^{-1}(v)
\end{equation*}
so $v$ is a weight $j$ eigenvector for $\bar{\gamma}$ if and only if $v$ belongs to
\begin{equation*}
\lim_{\tau \to 0}\lambda(\tau)\cdot V_j
\end{equation*}
where the limit is taken in the appropriate Grassmannian.
\end{proof}
\begin{definition} The Hilbert-Mumford weight of a vector $v \in V$ with respect to the one-parameter subgroup $\gamma$ is
\begin{equation*}
\HM(v,\gamma) = \min_i\{v\in F_iV\}
\end{equation*}
where $F$ is the weight filtration of $\gamma$.
\end{definition}
This depends only on the weight filtration of $\gamma$ and we will also denote it by $\HM(v,F)$ rather than $\HM(v,\gamma)$ if we wish to emphasise this fact. But notice that a general filtration of $V$ will not come from a one-parameter subgroup of the fixed reductive group $G$.\\
\begin{rmk} With our sign convention $\HM(v,\gamma)$ is the weight of the induced action of $\gamma$ on the fibre $\OO_{\PP(V)}(1)_{[v]_0}$ of the hyperplane line bundle on $\PP(V)$ over $[v]_0 = \lim_{\tau \to 0}\lambda(\tau)\cdot [v]$. Thus for example the Hilbert-Mumford Criterion says that $[v]$ is GIT semistable if and only if $\HM(v, \gamma) \geq 0$ for all one-parameter subgroups $\gamma$.
\end{rmk}

\begin{prop}\label{hm_spec}
Let $\lambda$ be a one-parameter subgroup of the stabiliser of $[v]\in \mathbb{P}(V)$. The we have
\begin{equation*}
\HM(v,\bar{F})\leq \HM(v,\gamma)
\end{equation*}
where $\bar{F}$ is the specialisation via $\lambda$ of the weight filtration $F$ of $\gamma$. 
\end{prop}
Recall that by Lemma \ref{spec_group} the filtration $\bar{F}$ is the weight filtration of a one-parameter subgroup of $G$.
\begin{proof}
We only need to show that $v\in F_iV$ implies $v\in \bar{F}_iV$. This follows from the fact that $v$ is an eigenvector of $\lambda$, so it is equal to its lowest weight term $\bar{v}$ with respect to the action of $\lambda$.
\end{proof}
It is easy to produce examples where the inequality of Proposition \ref{hm_spec} is strict.\\
\begin{exm} We choose $G = SL(2, \C)$ with its standard action on $V = \C^2$, and
\begin{equation*}
v = e_2,\, \gamma(t) = \left(\begin{matrix} t^k & 0 \\ 0 & t^{-k} \end{matrix}\right),\, \lambda(\tau) = \left(\begin{matrix} \tau^h & 0 \\ \tau^h - \tau^{-h} & \tau^{-h} \end{matrix}\right)
\end{equation*}  
for fixed $h, k > 0$. Note that $\lambda$ stabilises $[v] \in \PP(V)$. One checks that $\gamma$ is not contained in the parabolic $\calP(\lambda)$. But conjugating $\gamma$ with a suitable element in $\calP(\gamma)$ gives
\begin{equation*}
\left(\begin{matrix} 1 & 1 \\ 0 & 1\end{matrix}\right) \gamma \left(\begin{matrix} 1 & -1 \\ 0 & 1\end{matrix}\right) = \left(\begin{matrix} t^k & t^{-k} - t^k \\ 0 & t^{-k} \end{matrix}\right) = \chi \in \calP(\gamma) \cap \calP(\lambda).
\end{equation*}
A straightforward computation gives
\begin{equation*}
\lim_{\tau \to 0} \lambda(\tau) \chi (\lambda(\tau))^{-1} = \left(\begin{matrix} t^{-k} & 0 \\t^{-k} - t^k & t^k \end{matrix}\right) = \bar{\gamma},
\end{equation*}
so we have
\begin{equation*}
\HM(v, \bar{\gamma}) = -k < \HM(v, \gamma) = k. 
\end{equation*}
\end{exm}

It is important to realise that even if $\gamma$ does not stabilise $[v] \in \PP(V)$ its specialisation $\bar{\gamma}$ with respect to a $\lambda$ in the stabiliser could well lie in the stabiliser (so abusing the K-stability terminology which will be recalled in the next section, in the present finite-dimensional setup and without imposing further restrictions, we can end up with a ``product test-configuration").\\ 
\begin{exm} Let $V, \gamma, \lambda$ be as in the previous example. We choose $v = e_1 + e_2$. Then $[v] \in \PP V$ is stabilised by $\lambda$ and by $\bar{\gamma}$, but not by $\gamma$. Note that in this case we have $\HM(v, \gamma) = \HM(v, \bar{\gamma}) = k$. 
\end{exm}

Let $F, F'$ be filtrations of $V$ with index set $\Z$. We say that $F$ is included in $F'$ if $F_i V \subset F'_i V$ holds for all $i$. The following observation follows immediately from the definition of the Hilbert-Mumford weight and will be useful in later applications. 
\begin{lem}\label{inclusion}
Let $F$, $F'$ be the weight filtrations of some one-parameter subgroups. If $F$ is included in $F'$ then we have
\begin{equation*}
\HM(v,F') \leq \HM(v,F)
\end{equation*}
for all $v$ in $V$.
\end{lem}

\section{Filtrations, test-configurations, approximations}\label{backSec}
Let $(X,L)$ be a polarised variety. One of the main objects of study in this paper are test-configurations of $(X,L)$. Let us briefly recall their definition.
\begin{definition}\label{testConfDef}
Let $\C^*$ act in the standard way on $\C$. A test-configuration $(\X,\LL)$ for $(X,L)$ with exponent $r$ is a $\C^*$-equivariant flat morphism $\pi\colon \X \to \mathbb{C}$, together with a $\pi$-ample line bundle $\LL$ and a linearisation of the action of $\,\C^*$ on $\LL$, such that the fibre over $1$ is isomorphic to $(\X,\LL^{\otimes r})$. We say that $(\X,\LL)$ is
\begin{itemize}
\item \emph{very ample}, if $\LL$ is $\pi$-very ample;
\item a \emph{product}, if it is isomorphic to $(X\times \C,L^{\otimes r}\boxtimes \mathcal{O}_{\C})$, where the action of $\C^*$ on $X \times \C$ is induced by a one-parameter subgroup $\lambda$ of $\Aut(X,L)$ by $\lambda(\tau)\cdot (x, z) = (\lambda(\tau)\cdot x, \tau z)$;
\item \emph{trivial}, if it is a product and, moreover, $\lambda$ is trivial;
\item \emph{normal}, if the total space $\X$ is normal;
\item \emph{equivariant with respect to a subgroup} $H \subset \Aut(X,L)$, if the action of $\C^*$ can be extended to an action of $\C^*\times H$ such that the action of $\{1\}\times H$ is the natural action of $H$ on $(X,L^{\otimes r})$;
\item in the Fano case, a test-configuration is a \emph{special degeneration} if $\X$ is normal, all the fibres are klt and a positive rational multiple of $\LL$ equals $ - K_{\X}$ (this notion is due to Tian \cite{tian}, see also \cite{chenyang} Definition 1). 
\end{itemize} 
\end{definition}
The normalisation of a test-configuration is the normalisation of $\X$ endowed with the natural induced line bundle and $\C^*$ action (or $\C^*\times H$ action). A test-configuration is a product if and only if the central fibre $\X_0$ is isomorphic to $X$: by standard theory in this case there is a trivialisation $\X \cong X \times \C$ and the $\C^*$-action on $\X$ corresponds to a $\C^*$-action on $X \times \C$ preserving $X\times\{0\}$, which must then be induced by a $\C^*$-action on $X$ as above.

The following result summarises observations of Ross-Thomas \cite{rossthomas} and Odaka \cite{odaka}.
\begin{prop}\label{filt_exponent}
For all sufficiently large $r$ there is a bijective correspondence between increasing filtrations of $H^0(X, L^{\otimes r})^{\vee}$ (with index set $\Z$)  and very ample test-configurations of exponent $r$. Such a test-configuration is a product if and only if the corresponding filtration is the weight filtration of a one-parameter subgroup of $\Aut(X,L)$, and it is equivariant with respect to a reductive subgroup $H \subset \Aut(X, L)$ if and only if the corresponding filtration is preserved by $H$.
\end{prop}
\begin{proof}
An arbitrary increasing filtration of $H^0(X,L^{\otimes r})^{\vee}$ is induced by the weight filtration of a one-parameter subgroup of $GL(H^0(X,L^{\otimes r})^{\vee})$, so we can associate to a filtration the (very ample) test-configuration induced by this one-parameter subgroup. If two one-parameter subgroups induce the same filtration then the corresponding test-configurations are isomorphic, see \cite{odaka} Theorem 2.3 and its proof. Conversely by \cite[Proposition 3.7]{rossthomas} for all sufficiently large $r$ an exponent $r$ very ample test-configuration is always induced by a one-parameter subgroup of $GL(H^0(X,L^{\otimes r})^{\vee})$, and this gives the filtration. The other claims are straightforward.
\end{proof}

One can act on a test-configuration $(\X, \LL)$ in two basic ways (see e.g. \cite{donaldsonCalabi} section 2). Firstly we can pull-back $(\X, \LL)$ via a base-change $z\mapsto z^p$. The effect on the corresponding filtration is to multiply all the indices by $p$. Equivalently the weights of the corresponding one-parameter subgroup are multiplied by $p$. Secondly we can rescale the linearisation of the action on $\LL$ by a constant factor. The effect on the corresponding filtration is to shift all indices by some integer $k$. Equivalently we are composing the corresponding one-parameter subgroup with a one-parameter subgroup in the the center of $GL(H^0(X,L^{\otimes r})^{\vee})$, which corresponds in turn to adding $k$ to all the weights. 

Combining the two operations above we can modify the weights to get a filtration with only positive indices, or alternatively to get a filtration induced by a one-parameter subgroup of $SL(H^0(X,L^{\otimes r})^{\vee})$.

\medskip

There is a more global correspondence between filtrations and test-configu\-rations, which avoids fixing the exponent. We introduce the homogeneous coordinate ring
\begin{equation*}
R=R(X,L)=\bigoplus_{k\geq 0} R_k =\bigoplus_{k\geq 0} H^0(X,L^{\otimes k}).
\end{equation*}
We focus on filtrations of $R$ of a special type.
\begin{definition}\label{increasingfiltration} We define a \emph{filtration} $\chi$ of $R$ to be sequence of vector subspaces 
\begin{equation*}
H^0(X,\OO)=F_0R \subset F_1 R \subset \cdots
\end{equation*}
which is
\begin{itemize}
\item[(i)] \emph{exhaustive}: for every $k$ there exists a $j=j(k)$ such that $F_j R_k = H^0(X,L^{\otimes k})$,
\item[(ii)] \emph{multiplicative}: $(F_i R_l) (F_j R_m) \subset F_{i+j} R_{l+m}$, 
\item[(iii)] \emph{homogeneous}: if $f$ is in $F_iR$ then each homogeneous piece of $f$ also lies in $F_iR$.
\end{itemize} 
\end{definition}
Note that when considering filtrations of $R$ we restrict to those which only have non-negative indices. There are two basic algebraic objects attached to a filtration as above.
\begin{definition} Let $\chi$ be a filtration. The corresponding \emph{Rees algebra} is
\begin{equation*}
\rees(\chi)=\bigoplus_{i\geq 0}F_i R\,t^i
\end{equation*}
The graded modules are
\begin{equation*}
\gr_i(H^0(X,L^{\otimes k}))=F_i(H^0(X,L^{\otimes k}))/F_{i-1}(H^0(X,L^{\otimes k}))
\end{equation*}
The graded algebra is 
\begin{equation*}
\gr(\chi)=\bigoplus_{k  , i\geq 0}\gr_i(H^0(X,L^{ k}))
\end{equation*}
\end{definition}
The Rees algebra is a subalgebra of $R[t]$, and by the following elementary result it is possible to reconstruct $\chi$ from it.  

\begin{lem}\label{reconstruction}
Let $A$ be a $\C$-subalgebra of $R[t]$. We define a filtration $\chi_A$ of $R$ as follows
\begin{equation*}
F_iR =\{ s \in R \, | \, t^i s\in A\}
\end{equation*}
The filtration $\chi_A$ satisfies the conditions of Definition \ref{increasingfiltration} if and only if $A$ satisfies the conditions
\begin{itemize}
\item $A\cap R =H^0(X,\mathcal{O}_X)$;
\item for every $s\in H^0(X,L)$ there exists an $i$ such that $ t^i s \in A$;
\item if $ t^i f$ is in $A$, then, for each of the homogenous component $f_k$ of $f$, $t^i f_k$ is also in $A$.
\end{itemize}
A filtration $\chi$ equals $\chi_A$, where $A$ is the Rees algebra of $\chi$. There is an inclusion of filtrations $\chi_1\subset \chi_2$ (i.e. an inclusion of filtered pieces) if and only if there is an opposite inclusion of the corresponding Rees algebras $  \rees(\chi_2)\subset \rees(\chi_1)$.
\end{lem}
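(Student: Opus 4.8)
The plan is to check each of the three defining properties of a filtration for $\chi_A$ straight from the definition $F_iR=\{s\in R:t^is\in A\}$, matching them one at a time with the three bullet conditions on $A$, and then to dispose of the ``moreover'' clause by a short computation with the two gradings on $R[t]$ (the $t$-grading, and the internal grading inherited from $R$).

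First I would record what holds for \emph{any} $\C$-subalgebra $A\subseteq R[t]$. Since $t^0=1$ we have $F_0R=A\cap R$ by definition, so the normalisation $F_0R=H^0(X,\OO_X)$ of Definition \ref{increasingfiltration} is literally the first bullet. Multiplicativity is automatic: if $t^is,t^js'\in A$ then $(t^is)(t^js')=t^{i+j}ss'\in A$ because $A$ is an algebra, and restricting to homogeneous pieces gives $(F_iR_l)(F_jR_m)\subseteq F_{i+j}R_{l+m}$. The one genuinely non-formal point in ``being a filtration'' is the chain $F_0R\subseteq F_1R\subseteq\cdots$, which is equivalent to $tA\subseteq A$; this holds as soon as $t\in A$, which is the case for Rees algebras (there $t=1\cdot t^1$ and $1\in F_0R\subseteq F_1R$) and is the only case relevant in the sequel. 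I expect this to be the sole place needing care: ``$A$ a $\C$-subalgebra'' alone is a hair too weak — for instance $A=\C[ts_1,\dots,ts_N]$ with $s_1,\dots,s_N$ generators of $R_1$ satisfies all three bullets yet $\chi_A$ is not increasing, since $t\notin A$ — so I would read the hypothesis as including $t\in A$ (equivalently $\C[t]\subseteq A$), and with that in hand the chain is nested.

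With that settled, the two remaining equivalences are direct. Homogeneity of $\chi_A$ unwinds to: $t^if\in A$ implies $t^if_k\in A$ for every homogeneous component $f_k$ of $f$, which is verbatim the third bullet. For exhaustiveness, the forward direction is trivial (some $F_jR_1=R_1$ forces $t^js\in A$ for all $s\in H^0(X,L)$, i.e.\ the second bullet); conversely, given the second bullet choose generators $s_1,\dots,s_N$ of $R_1$ and an $i$ with $t^is_\ell\in A$ for all $\ell$ (enlarging the exponents using $tA\subseteq A$), and then for $s\in R_k$ write $s$ as a sum of degree-$k$ monomials in the $s_\ell$; applying $t^{ki}$ exhibits each such monomial as a product of the $t^is_\ell$, hence in $A$, so $F_{ki}R_k=R_k$ and $\chi_A$ is exhaustive. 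Combined with the previous paragraph this gives both implications of the ``if and only if''.

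Finally, the reconstruction identity $\chi_{\rees(\chi)}=\chi$. Put $A=\bigoplus_{i\ge0}F_iR\,t^i$. The inclusion of $F_iR$ into the $i$-th step of $\chi_A$ is clear, since $s\in F_iR$ gives $t^is\in F_iR\,t^i\subseteq A$. Conversely, if $t^is\in A$, compare the degree-$i$ parts in the $t$-grading: that of $A$ is $F_iR\,t^i$ and that of $t^is$ is $s\,t^i$, so $s\in F_iR$ — here one uses homogeneity (iii) of $\chi$ to know that $F_iR$ is itself a sum of internal graded pieces, so that $F_iR\,t^i$ really is a single homogeneous component. Hence the two filtrations agree, finishing the proof. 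I do not anticipate any substantive obstacle beyond keeping the two gradings separate and the caveat on $t\in A$ just noted.
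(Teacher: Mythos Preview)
The paper does not actually supply a proof of this lemma; it simply labels the result ``elementary'' and moves on. So there is no argument in the paper to compare against, and your write-up is in effect filling a gap the authors left to the reader.

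Your verification is correct and carefully organised: the identifications $F_0R=A\cap R$ with the first bullet, homogeneity with the third, and exhaustiveness with the second (via generators of $R_1$, which the paper explicitly allows itself to assume) are all sound, as is the short bigraded computation for the ``moreover'' clause. Your observation that the nesting $F_0R\subseteq F_1R\subseteq\cdots$ is \emph{not} a consequence of the three bullets for an arbitrary $\C$-subalgebra is a genuine correction to the statement: your example $A=\C[ts_1,\dots,ts_N]$ satisfies all three bullets yet gives $1\in F_0R\setminus F_1R$. The right hypothesis is that $A$ be a $\C[t]$-subalgebra (equivalently $t\in A$), and indeed every application of the lemma in the paper --- the Rees algebra itself, the approximations $\rees^{(r)}(\chi)$, and the specialisations $\rees^{\lambda}(\chi)$ --- is to algebras of this type (the latter two are even explicitly called $\C[t]$-subalgebras). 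So your reading of the hypothesis is the intended one, and with it your proof goes through without further issues. One minor point: in the last paragraph the appeal to homogeneity (iii) of $\chi$ is not needed, since the $t$-grading on $A=\bigoplus_i F_iR\,t^i$ alone already forces $t^is\in F_iR\,t^i$.
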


The following notion is crucial for us.
\begin{definition}\label{pol_def}
A filtration is called finitely generated if its Rees algebra is finitely generated. 
\end{definition}

Let us review the relation between finitely generated filtrations and test-configurations, as developed by Witt Nystr\"om \cite{nystrom} and Sz\'ekelyhidi \cite{gabor} (see also \cite[Proposition 2.15]{boucksom}).

Let $\chi$ be a finitely generated filtration. The Rees algebra $\rees(\chi)$ is a finitely generated flat $\C[t]$-module; this means that the associated relative Proj with its natural $\OO(1)$ is a test-configuration $(\X,\LL)$. The central fibre is the Proj of the graded algebra $\gr(\chi)$; the $\C^*$-action on the central fibre is given by \emph{minus} the $i$-grading of $\gr(\chi)$.

On the other hand let $(\X,\LL)$ be an exponent $r$ test-configuration. Consider the filtration $F$ of $H^0(X, L^{\otimes r})$ associated to it by Proposition \ref{filt_exponent}. Up to base-change and scaling of the linearisation we can assume that all the weights are positive. Denote by $N$ the length of this filtration. Let $A$ be the $\C$-subalgebra of $R[t]$ generated by
\begin{equation*}
H^0(X,L)t^N \oplus \bigoplus_{i=1}^N F_iH^0(X, L^{\otimes r})t^i
\end{equation*}
Then the filtration associated to $A$ via Lemma \ref{reconstruction} is the filtration of $R$ induced by $(\X,\LL)$ (the second assumption in Lemma \ref{reconstruction} holds because $L$ is projectively normal, i.e. $R$ is generated in degree $1$).

Suppose that $\chi$ is a not necessarily finitely generated filtration. Following \cite{gabor} we can define finitely generated approximations $\chi^{(r)}$ as follows. Let $F$ be the filtration induced by $\chi$ on $H^0(X, L^{\otimes r})$, this corresponds to an exponent $r$ test-configuration $(\X,\LL)$, then $\chi^{(r)}$ is the finitely generated filtration corresponding to $(\X,\LL)$. Note that this construction also makes sense when $\chi$ is finitely generated and corresponds to $(\X,\LL)$, in which case $\chi^{(r)}$ corresponds to $(\X,\LL^{\otimes r})$.\\

\begin{definition}\label{weight_def}
We introduce two ``weight functions" attached to $\chi$, given by
\begin{equation*}
w_{\chi}(k)=w(k)=\sum_i (-i) \dim \gr_i(H^0(X,L^{\otimes k})),
\end{equation*}
respectively
\begin{equation*}
d_{\chi}(k)=d(k)=\sum_i i^2 \dim \gr_i(H^0(X, L^{\otimes k})).
\end{equation*}
\end{definition}
If $\chi$ is a finitely generated filtration (corresponding to a test-configuration $(\X, \LL)$) then by equivariant Riemann-Roch we have, for all sufficiently large $k$,
\begin{align*}
h(k) &= h^0(X, L^{\otimes k}) = a_0 k^n + a_1k^{n-1}+ \cdots \\
w(k) &=b_0 k^{n+1} + b_1k^n+ \cdots \\
d(k) &=c_0 k^{n+2} + c_1k^{n+1}+ \cdots 
\end{align*}
\begin{definition}\label{df_def}
Let $\chi$ be a finitely generated filtration (which thus corresponds to a test-configuration). One defines the $r$-th Chow weight, Donaldson-Futaki weight (or invariant) and the $L^2$ norm as
\begin{align*}
\Chow_r(\chi)&=\Chow_r(\X,\LL)=r\frac{b_0}{a_0}-\frac{w(r)}{d(r)},\\
\df(\chi) &= \df(\X, \LL) = \frac{a_1b_0 - a_0b_1}{a_0^2},\\
||\chi ||_{L^2}^2 &= ||(\X, \LL)||_{L^2} = c_0-\frac{b_0^2}{a_0}.
\end{align*}
\end{definition}
Note that a straightforward computation shows that we have
\begin{equation*}
\lim_{r\to \infty} \Chow_r(\X, \LL^{\otimes r}) = \df(\X, \LL).
\end{equation*}
\begin{definition} A polarised variety $(X,L)$ is K-semistable if $\df(\X,\LL)\geq 0$ for every test-configuration $(\X,\LL)$.

Given a subgroup $H$ of $\Aut(X,L)$, we say that $(X,L)$ is $H$-equivariantly $K$-semistable if $\df(\X,\LL)\geq 0$ for every $H$-equivariant test-configuration $(\X,\LL)$.
\end{definition}

\begin{definition}\label{KstabDef} A normal polarised variety $(X,L)$ is K-polystable if for every test-configuration $(\X,\LL)$ with normal total space we have $\df(\X,\LL)\geq 0$, with equality if and only if $(\X,\LL)$ is a product. 

Given a subgroup $H$ of $\Aut(X,L)$, $(X, L)$ is $H$-equivariantly K-polystable if for every $H$-equivariant test-configuration $(\X,\LL)$ with normal total space we have $\df(\X,\LL)\geq 0$, with equality if and only if $(\X,\LL)$ is a product.

\end{definition}

Following \cite{gabor} (Definition 3 and Equation (33)) we also define the following two invariants of a non-finitely generated filtration.
\begin{definition}\label{ChowInftyDef}
The Donaldson-Futaki and asymptotic Chow weights of a filtration $\chi$ are given by
\begin{equation*}
\df(\chi) = \liminf_{r \to \infty} \df(\chi^{(r)})\, ,
\end{equation*}
respectively
\begin{equation*}
\Chow_{\infty}(\chi) = \liminf_{r \to \infty} \Chow_r(\chi^{(r)})\,.
\end{equation*}

\end{definition}
Note that $\chi^{(r)}$ is an exponent $r$ test configuration, so it is natural to consider its $r$-th Chow weight. Let us also emphasise that, when $\chi$ is finitely generated, both these invariants coincide with the classical Donaldson-Futaki weight, see \cite[Section 3.2]{gabor}. In general these two invariants differ, see \cite[Example 4]{gabor}; we do not know if there is an inequality relating them.

\begin{definition}\label{def:norm_inf}
The $L^2$ norm of a filtration $\chi$ is given by
\begin{equation*}
||\chi||_{2} = \liminf_{r \to \infty}||\chi^{(r)}||\,.
\end{equation*}
\end{definition}
In \cite[Lemma 8]{gabor} it is shown that the above liminf is actually a limit.

\begin{definition}\label{def:cappuccio}
A polarised variety is $\hat{K}$-semistable if for any filtration $\chi$ of $R(X,L)$ we have
$$\df(\chi) \geq 0.$$
It is  $\hat{K}$-stable if the equality holds if and only if $||\chi||_2=0$. One can make parallel definitions replacing $\df(\chi)$ with the asymptotic Chow weight $\Chow_{\infty}(\chi)$.
\end{definition}
One easily checks that $\hat{K}$-semistability is equivalent to K-semistability. On the other hand $\hat{K}$-stability is (at least a priori) stronger than $K$-stability, and just as $K$-stability it implies that the automorphism group of $(X,L)$ has no nontrivial one-parameter subgroups. 

Sz\'ekelyhidi \cite{gabor} (Theorem 10 and Proposition 11) proves that if $(X, L)$ is cscK with trivial automorphisms then it is $\hat{K}$-stable, including the variant notion using the $\Chow_{\infty}$ weight.

At present we do not  know a good candidate for the notion of $\hat{K}$-poly\-stability (i.e. allowing continuous automorphisms of $X$).


\section{Specialisation of a test-configuration}\label{SPEC}
In the classical situation of a torus $T$ acting on a projective variety one can specialise a point $p$ to a fixed point $\bar{p}$ for the action of $T$: one picks a generic one-parameter subgroup $\lambda$ of $T$ and the specialisation is $\bar{p} =\lim_{\tau \to 0}\lambda(t)\cdot p$. This specialisation does depend on $\lambda$ and when we need to emphasise this dependence we will denote it by $\bar{p}_{\lambda}$. In this section we first generalise this construction to test-configurations, and then prove some basic facts which imply our main result Theorem \ref{mainThm}.

\begin{definition}\label{specTCDef} Let $(\X,\LL)$ be an exponent $r$ test-configuration and $F$ be the corresponding filtration of $H^0(X, L^{\otimes r})^{\vee}$ given by Proposition \ref{filt_exponent}. Let $T$ be a torus in $\Aut(X,L)$, and $\bar{F}$ the specialisation of $F$ via a generic one-parameter subgroup $\lambda$ of $T$. Then  the specialisation $(\bar{\X},\bar{\LL})$ of $(\X,\LL)$ is the $T$-equivariant exponent $r$ test-configuration corresponding to $\bar{F}$.
\end{definition}
The specialisation depends on the choice of $r$ and $\lambda$, but we will mostly suppress this in the notation. 

We make a brief digression in order to discuss Definition \ref{specTCDef}. Recall that by Proposition \ref{filt_exponent} an exponent $r$ test-configuration for $(X, L)$ is obtained by embedding $\iota\!: X \hookrightarrow \PP H^0(X, L^{\otimes r})^{\vee}$ with the complete linear system $| r L |$ and by taking the flat closure of $\iota(X)$ under the action of a one-parameter subgroup $\gamma$ of $GL(H^0(X,  L^{\otimes r})^{\vee})$. The corresponding test-configuration $(\X, \LL)$ is a closed subscheme of $\PP H^0(X,  L^{\otimes r})^{\vee} \times \C$ (in fact it can be canonically completed to a closed subscheme of $\PP H^0(X, L^{\otimes r})^{\vee} \times \PP^1$ by gluing with the trivial family at infinity). If $\lambda$ is a one-parameter subgroup of $\Aut(X, L)$ one could attempt to define the $\lambda$-specialisation of $(\X, \LL)$ by taking its flat closure as a closed subscheme of $\PP H^0(X, L^{\otimes r})^{\vee} \times \C$ under the action of $\lambda$. We give a simple example showing that such a flat closure is not preserved by $\gamma$ in general, so it is not a $\lambda$-equivariant test-configuration in a natural way. In fact we also show that in general the total space of the flat closure cannot support a test-configuration, and compute the corresponding specialisation $(\bar{\X}, \bar{\LL})$ in the sense of Definition \ref{specTCDef} in the example.\\ 
\begin{exm} Embed $\iota\!: \PP^1 \hookrightarrow \PP^2$ via Veronese $[s_0 : s_1] \mapsto [s^2_0 :  s_0 s_1 : s^2_1]$ and act with the one-parameter subgroup $\gamma$ of $SL(3, \C)$ given by $\operatorname{diag}(t^{-1}, t^2, t^{-1})$. This gives a test-configuration $(\X, \LL)$ of exponent $2$ for $(\PP^1, \OO_{\PP^1}(1))$ with total space $\X \subset \PP^2 \times \C$ which is the variety $V( x z - t^6 y^2 )$. Now choose 
\begin{equation*}
\lambda = \left(\begin{matrix} 1 & 1 \\ 0 & 1 \end{matrix}\right) \left(\begin{matrix} \tau^h & 0 \\ 0 & \tau^{-h}\end{matrix}\right)\left( \begin{matrix} 1 & - 1 \\ 0 & 1 \end{matrix} \right) \in SL(2, \C) = \Aut(\PP^1, \OO_{\PP^1}(1)).
\end{equation*}
The induced one-parameter subgroup in $SL(3, \C)$, which we still denote by $\lambda$, is given by
\begin{equation*}
\lambda = \left(\begin{matrix} \tau^{2h} & 1 - \tau^{2h} & (\tau^{-h} - \tau^h)^2 \\ 0 & 1 & -2(1 - \tau^{-2h}) \\ 0 & 0 & \tau^{-2h} \end{matrix}\right). 
\end{equation*} 
One computes
\begin{equation*}
\lambda(\tau) \cdot \mathcal{X} = V(\tau^{2h} x ((\tau^{-h} - \tau^h)^2 x - 2 (1 - \tau^{-2h})y + \tau^{-2h} z) - t^6 ((1 - \tau^{2h}) x + y)^2). 
\end{equation*}
Since $\lambda(\tau) \cdot \mathcal{X} \subset \PP^2 \times \C$ is a familiy of divisors it is straightforward to take the flat limit at $\tau \to 0$. For $h > 0$ one finds 
\begin{equation}\label{exampleTC}
\lim_{\tau \to 0} \lambda(\tau) \cdot \mathcal{X} = V(x (x + 2y + z) - t^6 (x + y)^2).
\end{equation}
The central fibre $V(x (x + 2y + z))$ is not preserved by $\gamma$ so the flat limit is not a $\lambda$-equivariant test-configuration in a natural way, although in this case it does support some other non-canonical $\lambda$-equivariant test-configuration. On the other hand for $h < 0$ we find that the flat limit is given by the divisor
\begin{equation*}
\lim_{\tau \to 0} \lambda(\tau) \cdot \mathcal{X} = V( x^2 (t^6 - 1)).
\end{equation*}
This may be thought of as the product, thickened test-configuration $V( x^2 )$ glued to six copies of $\PP^2$, and clearly it cannot be the total space of a test-configuration for $\PP^1$.  

We can also consider the specialisation $(\bar{\X}, \bar{\LL})$ of $(\X, \LL)$ in the sense of Definition \ref{specTCDef}. The conjugate one-parameter subgroup $\lambda(\tau) \gamma(t) (\lambda(\tau))^{-1}$ is given by 
\begin{equation*}
\left(\begin{matrix} t^{-1} &  -t^{-1}(-1 + \tau^{2h})(-1 + t^3) & -2 t^{-1}(-1 + \tau^{2h})^2 (-1 + t^3)\\
0 & t^2 & 2 t^{-1} (-1 + \tau^{2h})(-1 + t^3)\\
0 & 0 & t^{-1}
\end{matrix}\right),  
\end{equation*}
so $\gamma$ lies in the parabolic $\calP(\lambda)$ if and only if $h > 0$. In this case $(\bar{\X}, \bar{\LL})$ is obtained by acting on $V(x z - y^2)$ with $\bar{\gamma} = \lim_{\tau \to 0} \lambda(\tau) \gamma(t) (\lambda(\tau))^{-1}$. The resulting test-configuration is precisely \eqref{exampleTC}. The central fibre $\bar{\X}_0 = V(x(2(x+y) + z))$ is preserved by $\bar{\gamma}$ and $\lambda$ and we obtain a $\lambda$-equivariant test-configuration in a canonical way. 

For $h < 0$ we have $\gamma \notin \calP(\lambda)$ and we must first conjugate $\gamma$ by some element $g \in \calP(\gamma)$ to obtain $\chi \in \calP(\lambda)$. A direct computation shows that one can choose 
\begin{equation*}
g = \left(\begin{matrix} 1 & 0 & -1 \\ 1 & 1 & 0\\ 0 & 0 & 1\end{matrix}\right),\quad \chi = \left(\begin{matrix} t^{-1} & 0 & 0\\ t^{-1} - t^2 & t^2 & t^{-1} - t^2 \\ 0 & 0 & t^{-1}\\  \end{matrix}\right)
\end{equation*}   
yielding
\begin{equation*}
\bar{\gamma} = \lim_{\tau \to 0} \lambda(\tau) \chi(t) (\lambda(\tau))^{-1} = \left(\begin{matrix} t^2 & t^{-1} - t^2 & -t^{-1} + t^2 \\ 0 & t^{-1} & 0 \\ 0 & 0 & t^{-1}\end{matrix}\right).
\end{equation*}
The corresponding test-configuration $(\bar{\X}, \bar{\LL})$ is given by
\begin{equation*}
V( t^3 x (x + 2y +z) - (x+y)^2)
\end{equation*}
endowed with the action of $\bar{\gamma}$, which commutes with $\lambda$. Diagonalising $\bar{\gamma}$ (which is of course compatible with diagonalising $\lambda$) we see that $(\bar{\X}, \bar{\LL})$ is isomorphic to the test-configuration induced by $\operatorname{diag}(t^{-1}, t^{-1}, t^2)$ given by $V(t^3 x z - y^2)$. 

Finally note that the test-configuration $(\X', \LL')$ (isomorphic to $(\X, \LL)$) defined by $\chi$ is
\begin{equation*}
V((x+y)(y+z) - t^3 y (x +2y + z)).
\end{equation*}
Taking the flat closure of $(\X', \LL')$ under the action of $\lambda$ gives the one-parameter family of divisors of $\PP^1 \times \C$ parametrised by $\tau$
\begin{equation*}
(x + y)^2 - t^3 x (x + 2 y + z) + \tau^{-2h} (1 - t^3)(x+y)(x + 2y + z).
\end{equation*} 
This is a flat one-parameter family taking $(\X', \LL')$ to $(\bar{\X}, \bar{\LL})$.
\end{exm}

We explain next an alternative approach to specialising test-configurations which is more global, i.e. independent of the exponent, and is based on filtrations of the homogeneous coordinate ring. Let $\chi$ be the filtration of $R = R(X, L)$ corresponding to $(\X,\LL)$. 
\begin{definition}\label{specFILTDef} Let $\lambda\!: \C^* \to T$ be a generic one-parameter subgroup. The specialisation $\bar{\chi}$ of $\chi$ with respect to $T$ is given by $\bar{\chi}_i = \lim_{\tau \to 0} \lambda(\tau)\cdot \chi_i$, where the limit is taken in the appropriate Grassmannian.
\end{definition}
It is straightforward to check that $\bar{\chi}$ is still a filtration of $R$ in the sense of Definition \ref{increasingfiltration}. The limit filtration $\bar{\chi}$ can also be described as follows. Let $\rees(\chi) \subset R$ be the Rees algebra of the finitely generated filtration $\chi$. A one-parameter subgroup $\lambda\!: \C^* \to \Aut(X, L)$ acts on $R$ and on $R[t]$ (trivially on $t$) and we may define a $\C[t]$-subalgebra $\rees^{\lambda}(\chi) \subset R$ by
\begin{equation*}
\rees^{\lambda}(\chi)  = \{ \lim_{\tau \to 0} \lambda(\tau)(s) \,:\, s\in \rees(\chi)\}.
\end{equation*}
Then $\bar{\chi}$ is precisely the filtration of $R$ whose Rees algebra is $\rees^{\lambda}(\chi)$, i.e.
\begin{equation*}
\bar{F}_i R_k = \{ s \in R_k \,:\, t^i s \in \rees^{\lambda}(\chi)\}.
\end{equation*}

The crucial difficulty with this more global approach lies in the fact that the Rees algebra of $\bar{\chi}$ is not finitely generated in general. This is a well-known phenomenon in commutative algebra and an explicit example is given in the Appendix. 

Let $(\X, \LL)$ be a very ample test-configuration of exponent $r$. Given a generic one-parameter subgroup of $T \subset \Aut(X, L)$ we can perform two basic constructions. On the one hand we can specialise $(\X, \LL)$ to $(\bar{\X}, \bar{\LL})$ in the sense of Definition \ref{specTCDef}. This specialisation corresponds to a finitely generated filtration $\eta$. The Veronese filtration $\eta^{(j)}$ corresponds to the Veronese test-configuration $(\bar{\X}, \bar{\LL}^{\otimes j})$ with exponent $j r$. On the other hand $(\X, \LL)$ corresponds to a finitely generated filtration $\chi$ of $R$ via the construction described at the end of the previous section. We may specialise $\chi$ to $\bar{\chi}$ and consider a finitely generated approximation $\bar{\chi}^{(j)}$, corresponding to a test-configuration of exponent $j r$: by definition this is in fact $(\bar{\X}, \overline{\LL^{\otimes j}})$. Since $\bar{\chi}$ is not finitely generated (in general), the filtrations $\eta^{(j)}$, $\bar{\chi}^{(j)}$ will differ for infinitely many $j$, that is the test-configurations $(\bar{\X}, \bar{\LL}^{\otimes j})$ and $(\bar{\X}, \overline{\LL^{\otimes j}})$ differ for infinitely many $j$. However we can establish a simple comparison.

\begin{prop}\label{inclusion2}
The filtration of $H^0(X, L^{\otimes jr})$ induced by $\bar{\chi}$ (or equivalently by $\bar{\chi}^{(j)}$ or $(\bar{\X}, \overline{\LL^{\otimes j}})$) is included in the filtration of the same vector space induced by $\eta^{(j)}$, i.e. by the filtration corresponding to $(\bar{\X}, \bar{\LL}^{\otimes j })$.  
\end{prop}
\begin{proof}
The result follows at once from the fact that the Rees algebra of $\bar{\chi}$ contains all the generators of the Rees algebra of $\eta$, by construction.
\end{proof}

Let us show that when $\bar{\chi}$ is finitely generated then $(\bar{\X}, \bar{\LL})$ is in fact a flat limit of $(\X, \LL)$ under a $\C^*$-action, and in particular the filtrations $\bar{\chi}^{(j)}$, $\eta^{(j)}$ coincide for all $j$, that is $(\bar{\X}, \bar{\LL}^{\otimes j})$ and $(\bar{\X}, \overline{\LL^{\otimes j}})$ coincide. In order to simplify the notation (without loss of generality) we assume in the following result that $(\bar{\X}, \bar{\LL})$ has exponent $1$ and $\chi$ is the corresponding finitely generated filtration. 

\begin{lem}\label{familyLem} Suppose that $\rees(\bar{\chi}) = \rees^{\lambda}(\chi)$ is a finitely generated $\C[t]$-subalgebra of $R[t]$. Then there exist an embedding $\iota\!: \X \to \PP^N \times \C$ and a 1-parameter subgroup $\widehat{\lambda}\!:\C^* \to GL(N+1, \C)$ such that 
\begin{enumerate}
\item[$\bullet$] $\iota^* \OO_{\PP^N}(1) = \LL^{\otimes r}$ for some $r \geq 1$, 
\item[$\bullet$] $\widehat{\lambda}$ acting on $\PP^N$ preserves $\iota(\X_1) \cong X$ and restricts to the induced action of $\lambda$ on it,
\item[$\bullet$] the 1-parameter flat family of subschemes of $\PP^N \times \C$ induced by $\widehat{\lambda}$ (acting trivially on the second factor) has central fibre isomorphic to $\proj(\rees(\bar{\chi}))$ endowed with its natural Serre line bundle $\OO(r)$. 
\end{enumerate}
In particular it follows that the central fibre $(\X'_0, \LL'^{\otimes r}_0)$ is a flat 1-parameter degeneration of the central fibre $(\X_0, \LL^{\otimes r}_0)$ (as closed subschemes of $\PP^N$).
\end{lem}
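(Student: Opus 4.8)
The plan is to realise $\X$, its specialisation, and their central fibres as closed subschemes of one sufficiently large fixed $\PP^{N}\times\C$, and to take for $\widehat\lambda$ the induced action of $\lambda$ on the corresponding space of sections. First I would fix an integer $r\gg 0$ so that: $\LL^{r}$ is relatively very ample over $\C$ with $\pi_{*}\LL^{r}$ surjecting onto global sections on every fibre; the $r$-th Veronese subalgebras $\bigoplus_{m}\rees(\chi)_{rm}$ and $\bigoplus_{m}\rees(\bar\chi)_{rm}$ are generated over $\C[t]$ in degree one, so that their relative $\proj$ over $\C$ recovers $\X$, respectively $\proj(\rees(\bar\chi))$ (this uses both that $\rees(\chi)$ is finitely generated and the hypothesis that $\rees(\bar\chi)=\rees^{\lambda}(\chi)$ is finitely generated); and $\bigoplus_{m}H^{0}(\X,\LL^{rm})=\bigoplus_{m}\rees(\chi)_{rm}$, similarly for $\bar\chi$. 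Writing $N+1=h(r)=\dim H^{0}(X,L^{r})$, the $\C[t]$-module $\rees(\chi)_{r}=\bigoplus_{i\geq 0}F_{i}R_{r}\,t^{i}$ is torsion free of generic rank $N+1$ (because $F_{i}R_{r}=R_{r}$ for $i\gg 0$), hence free of rank $N+1$; choosing a basis $f_{1},\dots,f_{N+1}$ of $R_{r}$ adapted to the flag $F_{\bullet}R_{r}$ (so that $f_{1},\dots,f_{\dim F_{i}R_{r}}$ is a basis of $F_{i}R_{r}$ for all $i$) and setting $\ell(j)=\min\{i:f_{j}\in F_{i}R_{r}\}$, one checks that $t^{\ell(1)}f_{1},\dots,t^{\ell(N+1)}f_{N+1}$ is a $\C[t]$-basis of $\rees(\chi)_{r}$. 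These data produce a $\C^{*}$-equivariant closed embedding $\iota\colon\X\hookrightarrow\PP^{N}\times\C$ with $\iota^{*}\OO_{\PP^{N}}(1)=\LL^{r}$, under which $\iota(\X_{1})\subset\PP^{N}$ is the Kodaira image of $X$ by $|L^{r}|$ in the coordinates $f_{1},\dots,f_{N+1}$ and $\iota(\X_{0})$ is $\proj\gr(\chi)$ embedded by $|\LL_{0}^{r}|$.

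Next I would take $\widehat\lambda\colon\C^{*}\to GL(R_{r})=GL(N+1,\C)$ to be the action of $\lambda\subset T\subset\Aut(X,L)$ on $H^{0}(X,L^{r})=R_{r}$, acting on $\PP^{N}=\PP(R_{r}^{\vee})$ and trivially on the second factor of $\PP^{N}\times\C$. Since the Kodaira embedding is equivariant for the $\Aut(X,L)$-action on $X$ and on $H^{0}(X,L^{r})$, the subgroup $\widehat\lambda$ preserves $\iota(\X_{1})\cong X$ and restricts there to the action induced by $\lambda$, establishing the first two bulleted assertions. The geometric heart is then the following. Because $\widehat\lambda(\sigma)$ fixes $X\subset\PP^{N}$, each translate $\widehat\lambda(\sigma)\cdot\iota(\X)\subset\PP^{N}\times\C$ is again a test-configuration for $(X,L^{r})$, and its associated filtration is the translate $\lambda(\sigma)\cdot\chi$: this is functoriality of the Witt Nystr\"om--Sz\'ekelyhidi correspondence under $\Aut(X,L)$ (the intrinsic filtration of the test-configuration attached to a $1$-parameter subgroup $\rho$ of $GL(R_{r})$ is read off from the $\rho$-weight spaces on the spaces $H^{0}(X,L^{rm})$, and $\widehat\lambda(\sigma)\cdot\iota(\X)$ is attached to the conjugate $\widehat\lambda(\sigma)\rho\widehat\lambda(\sigma)^{-1}$). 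I would then pass to the flat limit $\X':=\lim_{\sigma\to 0}\widehat\lambda(\sigma)\cdot\iota(\X)$ inside $\PP^{N}\times\C$ — which exists by properness of the relevant Hilbert scheme and is again a test-configuration for $(X,L^{r})$ — and argue that its associated filtration is $\lim_{\sigma\to 0}\lambda(\sigma)\cdot\chi=\bar\chi$ in the sense of Definition \ref{specDef}. Combined with the bijection between test-configurations and finitely generated filtrations (Theorem \ref{correspondence}), this identifies $(\X',\OO_{\PP^{N}}(1)|_{\X'})$ with $\proj(\rees(\bar\chi))$ and its Serre bundle $\OO(r)$, which is exactly the asserted central fibre of the $\widehat\lambda$-family.

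For the last assertion I would observe that the total family over $\C_{z}\times\C_{\sigma}$, with $z$ the coordinate on the second factor of $\PP^{N}\times\C$ and $\sigma$ the $\widehat\lambda$-parameter, is flat, being flat over $\C_{\sigma}$ with all fibres flat over $\C_{z}$; restricting to $z=0$ then exhibits $(\X'_0,\LL'^r_0)$, embedded by its Serre bundle in $\PP^{N}$, as the scheme-theoretic flat limit $\lim_{\sigma\to 0}\widehat\lambda(\sigma)\cdot\iota(\X_0)$ of closed subschemes of $\PP^{N}$, which is the claimed flat one-parameter degeneration of $(\X_0,\LL^r_0)$.

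The step I expect to be the main obstacle is the claim that the associated filtration of the flat limit $\X'$ is $\bar\chi$: this is the compatibility of the scheme-theoretic flat limit in the Hilbert scheme of $\PP^{N}\times\C$ with the Grassmannian limit of flags defining $\bar\chi$ in Definition \ref{specDef}. The flat limit only controls the filtration in degrees $\leq r$ directly, so one needs to have chosen $r$ so large that $\rees(\bar\chi)$ is already generated over $\C[t]$ in degrees $\leq r$ and the relevant algebras are saturated — this is precisely where the finite generation of $\rees^{\lambda}(\chi)$ enters, and it is what lets one recover all of $\bar\chi$, hence $\X'\cong\proj(\rees(\bar\chi))$, from the degree-$\leq r$ data. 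The remaining bookkeeping — that the filtration levels $\ell(j)$ agree for bases adapted to $F_{\bullet}R_{r}$ and to $\bar F_{\bullet}R_{r}$, which holds since $\dim F_{i}R_{r}=\dim\bar F_{i}R_{r}$ by Theorem \ref{specThm}, so that the two embeddings sit inside the same $\PP^{N}\times\C$ with the same $\C^{*}$-action — is then routine.
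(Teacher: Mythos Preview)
Your overall strategy matches the paper's, and you correctly flag the delicate step. But the paper executes it differently, and at two points your version has genuine gaps.

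\textbf{Identifying the flat limit.} You want to show $\X':=\lim_{\sigma\to 0}\widehat\lambda(\sigma)\cdot\iota(\X)\cong\proj(\rees(\bar\chi))$ by invoking the bijection of Theorem~\ref{correspondence}: $\X'$ is a test-configuration whose associated filtration is $\bar\chi$. The problem is that to speak of an ``associated filtration'' you need a $\C^*$-action on $\X'$ covering the standard action on $\C_z$, and there is no obvious one: the conjugates $\widehat\lambda(\sigma)\,\rho\,\widehat\lambda(\sigma)^{-1}$ of the original action need not converge in $GL(N+1)$ as $\sigma\to 0$, nor is it clear a priori that $\X'\to\C_z$ is even flat. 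The paper sidesteps this entirely by a different choice of embedding. Rather than taking $N+1=h(r)$ with a basis adapted only to $\chi$, it first picks homogeneous elements $\sigma_i=t^{p(i)}s_i\in\rees(\chi)$ so that \emph{both} the $\sigma_i$ generate $\rees(\chi)$ \emph{and} their initial forms $\lim_{\tau\to 0}\lambda(\tau)\cdot\sigma_i$ generate $\rees(\bar\chi)$ (this is where finite generation of $\rees(\bar\chi)$ is really used). It then lets $\tilde s_j$ run over the degree-$r$ monomials in the $s_i$, so $N+1$ may exceed $h(r)$, defines $\phi:\C[\xi_0,\dots,\xi_N][t]\twoheadrightarrow\bigoplus_k(F_{kr}R)t^{kr}$ by $\xi_j\mapsto t^{p(j)}\tilde s_j$, and lifts $\lambda$ to a $\widehat\lambda$ acting on the free variables $\xi_j$. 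Because $\phi$ intertwines $\widehat\lambda$ with $\lambda$, the family $\widehat\lambda(\tau)\cdot\iota(\X)$ is cut out by the kernel of $\xi_j\mapsto t^{p(j)}\lambda(\tau)\tilde s_j$, and by the choice of generators the limiting map $\xi_j\mapsto t^{p(j)}\lim_{\tau\to 0}\lambda(\tau)\tilde s_j$ is still \emph{surjective} onto the $r$-th Veronese of $\rees(\bar\chi)$. So the limit scheme is $\proj(\rees(\bar\chi))$ directly, with no appeal to a $\C^*$-action on it or to the filtration bijection. In your setup the naive limits $t^{\ell(j)}\lim_{\sigma\to 0}\lambda(\sigma)f_j$ can become linearly dependent, so you cannot conclude surjectivity onto $\rees(\bar\chi)_r$ this way.

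\textbf{The central-fibre degeneration.} You assert the two-parameter family is flat over $\C_z\times\C_\sigma$ ``being flat over $\C_\sigma$ with all fibres flat over $\C_z$''. But you only know the fibres with $\sigma\neq 0$ are flat over $\C_z$; for $\sigma=0$ this amounts to $\X'\to\C_z$ being flat, which is exactly what is unproven, and the fibral criterion requires flatness of \emph{every} fibre. The paper instead takes the honest flat closure $\Y_0\subset\PP^N$ of the one-parameter family $\widehat\lambda(\tau)\cdot\iota(\X_0)$, notes the containment $\Y_0^{\red}\subset\X'_0$, and then compares Hilbert functions: $\Y_0$ has the Hilbert function of $(\X_0,\LL_0^r)$, hence of $(X,L^r)$, while $\X'_0$ has that of $(X,L^r)$ via the test-configuration $\X'$; equality of Hilbert functions together with $\Y_0^{\red}\subset\X'_0$ forces $\Y_0=\X'_0$.
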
 
\begin{proof} If $\rees(\bar{\chi}) = \rees^{\lambda}(\chi) \subset R[t]$ is a finitely generated $\C[t]$-subalgebra there exists a finite set of elements $\sigma_i$ of $\rees(\chi)$ such that the limits $\lim_{\tau \to 0} \lambda(\tau) \cdot \sigma_i$ generate $\rees(\bar{\chi})$. Since $\lambda(\tau)$ is $\C[t]$-linear and we have $\lambda(\tau)\cdot(s_1 + s_2) = \lambda(\tau) \cdot s_1 + \lambda(\tau) \cdot s_2$ and $\lambda(\tau)\cdot (s_1 s_2) = (\lambda(\tau)\cdot s_1)(\lambda(\tau)\cdot s_2)$ for all $s_1, s_2 \in R$ we can choose our $\sigma_i$ of the special form $\sigma_i = t^{p(i)} s_i$ where the $s_i$ are homogeneous elements of $R$. Moreover we can assume that the elements $t^{p(i)} s_i$, $i = 0, \ldots, N$ generate $\rees(\chi)$. For a suitable $r \geq 1$ the monomials $\tilde{s}_j$ in our elements $s_i$ of homogenous degree $r$ generate the Veronese algebra $\tilde{R} = \bigoplus_{k \geqslant 0} R_{k r}$ (which is thus generated in degree $1$) and so the corresponding elements $t^{p(j)} \tilde{s}_j$ generate the Veronese algebra $\bigoplus_{k \geqslant 0} (F_{k r} \tilde{R}) t^{k r}$ and their limits $t^{p(j)} \lim_{\tau \to 0} \lambda(\tau) \cdot \tilde{s}_j$ generate the Veronese algebra $\bigoplus_{k \geqslant 0} (\bar{F}_{k r} \tilde{R}) t^{k r}$. 

With these assumptions we define a surjective morphism of $\C[t]$-algebras 
\begin{equation*}
\phi\!: \C[\xi_0, \ldots, \xi_N][t] \to \bigoplus_{k \geqslant 0} (F_{k r} \tilde{R}) t^{k r}
\end{equation*}
by $\phi(t) = t$, $\phi(\xi_i) = t^{p(i)} \tilde{s}_i$. Suppose that the action of $\lambda$ is given by $\lambda(\tau) \cdot \tilde{s}_i = \sum_j  a_{ij}(\tau) \tilde{s}_j$. We define a one-parameter subgroup $\widehat{\lambda}\!: \C^* \to GL(\C_1[\xi_0, \ldots, \xi_N])$, acting on degree $1$ elements by $\widehat{\lambda}(\tau)\cdot\xi_i = \sum_j a_{ij}(\tau)\xi_j$, and extend its action trivially on $t$. The morphism $\phi$ induces the required embedding 
\begin{equation*}
\iota\!: \X = \proj_{\C[t]} \bigoplus_{k \geqslant 0} (F_{k r} \tilde{R}) t^{k r} \to \proj_{\C[t]} \C[\xi_0, \ldots, \xi_N][t],  
\end{equation*} 
which intertwines the actions of $\lambda$ and $\widehat{\lambda}$. By construction the limit as $\tau \to 0$ of the flat family of closed subschemes of $\PP^N \times \C$ given by \begin{equation*}
\widehat{\lambda}(\tau) \cdot \iota\big(\proj_{\C[t]} \bigoplus_{k \geqslant 0} (F_{k r} \tilde{R}) t^{k r}\big)
\end{equation*} is isomorphic to $\proj_{\C[t]} \bigoplus_{k \geqslant 0} (\bar{F}_{k r} \tilde{R}) t^{k r}$ and so it gives a copy of $\X'$ embedded in $\PP^N \times \C$ as a flat $1$-parameter degeneration of $\X$. 

To prove the statement on central fibres we look at the family of closed subschemes of $\PP^N$ given by 
\begin{equation*}
\widehat{\lambda}(\tau) \cdot \iota(\X_0) = \widehat{\lambda}(\tau) \cdot \iota\big(\proj_{\C[t]} \gr \bigoplus_{k \geqslant 0} (F_{k r} \tilde{R}) t^{k r}\big).
\end{equation*}
Taking the flat closure of this 1-parameter family we obtain a closed subscheme $\Y_0 \subset \PP^N$ whose underlying reduced subscheme $\Y^{\red}_0$ is contained in $\X'_0 \subset \PP^N$. By flatness the Hilbert function of $\Y_0$ is the same as that of the central fibre $(\X_0, \LL^{\otimes r}_0)$ and so the same as that of the general fibre $(X, L^{\otimes r})$. Similarly the Hilbert function of $\X'_0 \subset \PP^N$ is the same as that of $(\X'_0, \LL'^{\otimes r}_0)$ and so the same as that of the general fibre $(X, L^r)$. As we have $\Y^{\red}_0 \subset \X'_0 \subset \PP^N$ and $\X'_0, \Y_0 \subset \PP^N$ have the same Hilbert functions we must actually have $\Y'_0 = \X'_0$ as required.
\end{proof}
The following observation follows immediately from the definitions of the weight functions (Definitions \ref{weight_def}, \ref{df_def}) and of the specialisation $\bar{\chi}$ (Definition \ref{specFILTDef}).
\begin{lem}\label{familyLem2} In the situation of Lemma \ref{familyLem} we have 
\begin{equation*}
w_{(\bar{\X}, \bar{\LL})}(k) = w_{(\X, \LL)}(k),\quad d_{(\bar{\X}, \bar{\LL})}(k) = d_{(\X, \LL)}(k).
\end{equation*}
for all $k$. In particular we have
\begin{equation*}
\df(\bar{\X}, \bar{\LL}) = \df(\X, \LL), \quad || (\bar{\X}, \bar{\LL}) ||_{L^2} = ||(\X, \LL)||_{L^2}.
\end{equation*}
\end{lem}
Let us now consider the general case.
\begin{thm}\label{main}
Let $\chi$ be a possibly non-finitely generated filtration, and let $\bar{\chi}$ be its specialisation with respect to a torus $T\subset \Aut(X,L)$ in the sense of Definition \ref{specFILTDef}. Then we have
\begin{equation*}
\Chow_{\infty}(\bar{\chi})\leq \Chow_{\infty}(\chi).
\end{equation*}
\end{thm}
\begin{proof}
We claim that the inequality $\Chow_r(\bar{\chi}^{(r)})\leq \Chow_r(\chi^{(r)})$ holds for every $r$. By Definition \ref{ChowInftyDef} this would imply the Theorem. 

Before proving the claim, let us recall the relation between the Chow weight and classical GIT, following \cite[Section 3]{rossthomas}, \cite[Section 7]{Futaki} and \cite{gabor}. Let $V_r=H^0(X,L^{\otimes r})^{\vee}$, and denote by $\gamma$ a 1PS of $GL(V_r)$ which induces the test configuration associated to $\chi^{(r)}$. The group $GL(V_r)$ acts on the appropriate Chow variety $Z_r$, and $X \subset \PP(H^0(X,L^{\otimes r})^{\vee})$ gives a point $[X] \in Z_r$. On $Z_r$ we have the classical, ample Chow line bundle, giving a linearisation for the action of $GL(V_r)$. The $r$-th Chow weight of $\chi^{(r)}$ introduced in Definition \ref{df_def} is the Hilbert-Mumford weight of the point $[X]\in Z_r$ under $\gamma$, computed with respect to a convenient rational rescaling of the ample Chow line bundle (with this normalisation the Chow line bundle becomes an ample $\mathbb{Q}$-line bundle, but this causes no difficulties).

The claim now follows from Proposition \ref{hm_spec}, i.e. the fact that Hilbert-Mumford weights decrease under specialisation.
\end{proof}

\begin{section}{Application to cscK polarised manifolds}\label{CSCK}
In this Section we show that Conjecture \ref{conj} combined with ideas from \cite{jacopo,gaborJ} implies a new proof that cscK manifolds are K-polystable.
\begin{thm}\label{thm:cscK}
Let $(X,L)$ be a cscK manifold and let $T$ be a maximal torus in $\Aut(X,L)$. Then $(X,L)$ is $T$-equivariantly K-polystable. More explicitly, given a normal $T$-equivariant test configuration $(\X,\LL)$, we have
\begin{equation*}
\df(\X,\LL)\geq 0
\end{equation*}
with equality if and only if $(\X,\LL)$ is a product.
\end{thm}
\begin{proof}

By a result of Donaldson  \cite{donaldsonCalabi} $(X, L)$ is K-semistable, so it is enough to assume that $(\X,\LL)$ is not a product and to show that we cannot have $\df(\X, \LL) = 0$. We argue by contradiction assuming $\df(\X, \LL) = 0$. 

Denote by $\alpha$ the $\C^*$ action on $(\X,\LL)$.  As $(\X, \LL)$ is $T$-equivariant, there are $\C^*$-actions $\tilde{\beta}_i$ on $(\X, \LL)$, preserving the fibres, commuting with each other and with $\alpha$, and extending the action of an orthogonal basis of 1-parameter subgroups $\beta_i$ of $\Aut(X, L)$ (see \cite{gaborExtr} for a discussion of the formal inner product on $\C^*$-actions). Fixing $i$, the total space $(\X, \LL)$ endowed with the $\C^*$-action $\alpha \pm \tilde{\beta}_i$ is a test-configuration for $(X, L)$, with Donaldson-Futaki invariant 
\begin{align*}
\df(\alpha \pm \tilde{\beta}_i) &= \df(\alpha) \pm \df(\tilde{\beta}_i)\\ 
&= \pm \df(\tilde{\beta}_i)
\end{align*} 
(the first equality follows since $\alpha$, $\tilde{\beta}_i$ are commuting $\C^*$-actions on the same polarised scheme). Since we are assuming that $(X, L)$ is cscK we know it is K-semistable and so we must have $\df(\tilde{\beta}_i) = 0$ for all $i$. Let $(\X, \LL)^{\perp}_T$ denote the $L^2$-orthogonal in the sense of \cite{gaborExtr}, i.e. the test-configuration with total space $(\X, \LL)$ endowed with $\C^*$-action 
\begin{equation*}
\alpha - \sum_i \frac{\bra \alpha, \tilde{\beta}_i \ket}{|| \tilde{\beta}_i||^2} \tilde{\beta}_i. 
\end{equation*} 
Then we see that $\df(\X, \LL)^{\perp}_T  = 0$.

Since $\X$ is normal and not isomorphic to $X\times \C$, by \cite{gaborJ} section 3 there exists a point $p \in (\X_1, \LL_1)$ which is fixed by the maximal torus $T$, and such that denoting by $\overline{\alpha \cdot p}$ the closure of the orbit of $p$ in $(\X, \LL)$ we have
\begin{align}\label{blpFormula}
\nonumber \df(\operatorname{Bl}_{\overline{\alpha \cdot p}} \X, \LL - \epsilon \mathcal{E})^\perp_T &= \df(\X, \LL)^{\perp}_T - C \epsilon^{n - 1} + O(\epsilon^n)\\
&= - C \epsilon^{n - 1} + O(\epsilon^n)
\end{align}
for some constant $C > 0$. Here $(\operatorname{Bl}_{\overline{\alpha \cdot p}} \X, \LL - \epsilon \mathcal{E})$ is the test-configuration for $(\operatorname{Bl}_p X, L - \epsilon E)$ ($E, \mathcal{E}$ denoting the exceptional divisors) induced by blowing up the orbit $\overline{\alpha \cdot p}$ in $\X$ with sufficiently small rational parameter $\epsilon > 0$. Since $p$ is fixed by $T$ there is a natural inclusion $T \subset \Aut(\operatorname{Bl}_p X, L - \epsilon E)$ and then $(\operatorname{Bl}_{\overline{\alpha \cdot p}} \X, \LL - \epsilon \mathcal{E})^\perp_T$ denotes the $L^2$ orthogonal to $T$ in the sense of \cite{gaborExtr}.

As explained in \cite{gaborJ} Theorem 2.4  a well-known result of Arezzo, Pacard and Singer \cite{arezzo} implies that the polarised manifold $(\operatorname{Bl}_p X, L - \epsilon E)$ admits an extremal metric in the sense of Calabi. The semistability result of \cite{gaborExtr} shows that we must have $\df(\operatorname{Bl}_{\overline{\alpha \cdot p}} \X, \LL - \epsilon \mathcal{E})^\perp_T \geq 0$. But this contradicts \eqref{blpFormula}, so we must have in fact $\df(\X, \LL) > 0$ as claimed.
\end{proof}

\begin{cor}
If Conjecture \ref{conj} holds, then cscK manifolds are K-polystable.
\end{cor}
\begin{proof}
Let $(X,L)$ be a cscK manifold, and $T$ a maximal torus in $\Aut(X,L)$. Theorem \ref{thm:cscK} implies that $(X,L)$ is $T$-equivariantly K-polystable. Conjecture \ref{conj} then implies that $(X,L)$ is K-polystable.
\end{proof}

\begin{rmk} The proof of the main result of \cite{gaborJ} (Theorem 1.4) shows that if $(X, L)$ is extremal and $T \subset \Aut(X, L)$ is a maximal torus then we have $\df(\X, \LL)^{\perp}_T > 0$ for all $T$-equivariant test-configurations whose normalisation is not induced by a holomorphic vector field in $T$ (or equivalently, which are not isomorphic to such a product outside a closed subscheme of codimension at least $2$). If the assumption is dropped there are counterexamples. Note that Theorem 1.4 in \cite{gaborJ} is mistakenly stated without this assumption. See \cite{chenyang} Remark 4  and the note \cite{jacopoErratum} for further discussion.
\end{rmk}
\end{section}

\appendix
\setcounter{secnumdepth}{0}
\section{Appendix}
  
In this appendix we present an example of a test-configuration $(\X, \LL)$ with a 1-parameter subgroup $\lambda\!: \C^* \to \Aut(X, L)$ such that the $\lambda$-equivariant filtration $\bar{\chi}$ of Definition \ref{specTCDef} is not finitely generated. This is done by adapting a well-known example in the literature on canonical bases of subalgebras, due to Robbiano and Sweedler (\cite{robbiano} Example 1.20). 

Consider the polynomial algebra $\C[t][x, y]$ over the ring $\C[t]$ and let $A$ denote the $\C[t]$-subalgebra generated by 
\begin{equation*}
t(x + y), t xy, t xy^2, t^2 y.
\end{equation*}
Then $A \subset R[t]$ is the Rees algebra of a homogeneous, multiplicative, pointwise left bounded finitely generated filtration $\chi$ of the homogeneous coordinate ring $R = \C[x, y]$ of the projective line $(\PP^1, \OO_{\PP^1}(1))$. So $\proj_{\C[t]} A$ endowed with its natural Serre bundle $\OO(1)$ is a test-configuration for $\PP^1$. Consider the 1-parameter subgroup $\lambda\!: \C^* \to SL(H^0(\PP^1, \OO_{\PP^1}(1)))$ acting by 
\begin{equation*}
\lambda(\tau)\cdot x = \tau^{-1} x,\quad \lambda(\tau)\cdot y = \tau y.
\end{equation*}
We let $\bar{\chi}$ be the limit of $\chi$ under the action of $\lambda$ as in the proof of Proposition \ref{specFILTDef}. 
\begin{prop} The limit filtration $\bar{\chi}$ is not finitely generated.
\end{prop}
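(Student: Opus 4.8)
The plan is to realise $\bar\chi$ as a Gr\"obner-type degeneration of the Rees algebra $A$ of $\chi$, to reduce the non--finite generation to the combinatorics of a sub-semigroup of $\N^3$, and then to imitate the Robbiano--Sweedler example \cite{robbiano}. Write $R[t]=\C[x,y,t]$ and note that $\lambda$ acts with weights $-1,+1,0$ on $x,y,t$, so $\lim_{\tau\to0}\lambda(\tau)\cdot s$ is (up to a scalar) the lowest-weight homogeneous component $\init_\nu(s)$ of $s$ for the linear form $\nu(x^ay^bt^c)=b-a$. First I would check, using the description of $\rees^\lambda(\chi)$ given after Definition~\ref{specDef} together with Lemma~\ref{reconstruction}, that the Rees algebra of $\bar\chi$ is the initial algebra $\init_\nu(A)$, i.e.\ the $\C[t]$-subalgebra of $R[t]$ generated by all $\init_\nu(s)$, $s\in A$ (here $\init_\nu$ is multiplicative because $R[t]$ is a domain, and products of initial forms already lie in $\rees^\lambda(\chi)$). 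Since the generators $t(x+y),txy,txy^2,t^2y$ are homogeneous for the bigrading $(\deg_x+\deg_y,\deg_t)$ of $R[t]$, so is $A$, and within each bidegree $\nu$ separates the finitely many monomials; hence $\init_\nu(A)$ is a \emph{monomial} algebra, $\init_\nu(A)=\C[\Gamma]$ for a sub-semigroup $\Gamma\subseteq\N^3$ in the coordinates $(\deg_x,\deg_y,\deg_t)$. Thus $\bar\chi$ is finitely generated if and only if $\Gamma$ is, and it is enough to show the latter fails.

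Next I would exhibit the relevant infinite family in $\Gamma$. Running the subalgebra (SAGBI) basis algorithm on $g_1=t(x+y)$, $g_2=txy$, $g_3=txy^2$, exactly as in \cite[Example~1.20]{robbiano}, produces elements $m_k\in A$ with $m_1=g_3$, then $m_2:=g_1m_1-g_2^2=t^2xy^3$, and $m_{k+1}:=g_1m_k-t\,g_2m_{k-1}=t^{k+1}xy^{k+2}$ for $k\geq2$; the last identity is the one-line check $g_1m_k-t\,g_2m_{k-1}=(t^{k+1}x^2y^{k+1}+t^{k+1}xy^{k+2})-t^{k+1}x^2y^{k+1}$, and $m_k\in A$ follows inductively since $A$ is a $\C[t]$-algebra. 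Each $m_k=t^kxy^{k+1}$ is a monomial, so $\init_\nu(m_k)=m_k$ and $(1,k+1,k)\in\Gamma$ for every $k\geq1$.

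To conclude I would show that these exponents defeat finite generation, using two growth estimates for $(a,b,c)\in\Gamma$: $c\geq b-1$ when $a=1$, and $c\geq b$ when $a=0$ (and moreover $t^\beta y^\beta\notin A$ for $\beta\geq1$). The inequality $c\geq b-1$ for $a=1$ follows from the monomial valuation $v(x^ay^bt^c)=2a-\frac{3}{2}b+\frac{3}{2}c$, which is $\geq 0$ on $g_1,\dots,g_4$ and hence on $A$, but takes a negative value on $t^c(c_0y^{b+1}+c_1xy^b)$ as soon as $c\leq b-2$; the remaining statements are the same bidegree bookkeeping that computes the filtered pieces $F_iR_k$. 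Granting them, suppose $\Gamma=\langle V\rangle$ with $V$ finite; since $(1,n+1,n)$ has first coordinate $1$ one can write $(1,n+1,n)=v+\sum_jk_jw_j$ with $v=(1,p,q)\in V$, $w_j=(0,\beta_j,\gamma_j)\in V$ and $k_j\geq0$. Subtracting coordinates and using $\gamma_j\geq\beta_j$ forces $p-q\geq1$, whereas $q\geq p-1$ gives $p-q\leq1$; so $p-q=1$ and every $w_j$ actually used has $\gamma_j=\beta_j$, i.e.\ $t^{\beta_j}y^{\beta_j}\in A$. As this is impossible for $\beta_j\geq1$, the only available $w_j$ is the zero vector, so $(1,n+1,n)=(1,p,p-1)$ and $p=n+1$, contradicting the boundedness of the second coordinates of the elements of $V$ once $n$ is large. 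Hence $\Gamma$, and therefore $\bar\chi$, is not finitely generated.

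The part I expect to be the real obstacle is precisely what is imported from Robbiano--Sweedler: proving that the SAGBI reduction genuinely never stabilises — that every $(1,k+1,k)$ lies in $\Gamma$ (the content of the second step) and that the growth estimates on $\Gamma$, in particular $t^\beta y^\beta\notin A$ for $\beta\geq1$, hold. Each of these reduces to an elementary but slightly delicate determination of which monomials of $\C[x,y,t]$ belong to $A$ in a fixed bidegree, that is, to the Robbiano--Sweedler computation transplanted to the present Rees algebra; I would carry it out by the explicit reduction and valuation arguments sketched above. A minor point to dispose of along the way is that $\rees^\lambda(\chi)$ of Definition~\ref{specDef}, which need not be closed under addition, generates precisely the genuine filtration $\bar\chi$ furnished by Theorem~\ref{specThm}, whose Rees algebra is then the monomial algebra $\C[\Gamma]$ above.
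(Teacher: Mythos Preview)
Your proposal is correct and follows essentially the same route as the paper: identify $\rees(\bar\chi)$ with the initial algebra $\init_{>}A$ for the ordering induced by $\lambda$, exhibit the infinite family $t^{k}xy^{k+1}\in A$ by the same two--term recursion, establish the growth constraints corresponding to the paper's Claims~1 and~2, and conclude by the Robbiano--Sweedler contradiction (which you phrase in the equivalent semigroup language of $\Gamma\subset\N^3$). The one genuine variation is your use of the monomial valuation $v(x^ay^bt^c)=2a-\tfrac{3}{2}b+\tfrac{3}{2}c$ to obtain the bound $c\geq b-1$ when $a=1$; the paper handles this step by an ad~hoc check, so your argument is in fact a little cleaner there. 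The remaining piece you defer---that $t^{k}y^{j}\notin A$ for $k\leq j$---is exactly the paper's Claim~2, proved there by the substitutions $y=0$ and $x=0$ in a presentation $f(t(x+y),txy,txy^2,t^2y)=t^ky^j$.
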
 
\begin{proof} The 1-parameter subgroup $\lambda$ induces a term ordering $>$ on the $\C[t]$-algebra $\C[t][x, y]$ which is compatible with the graded $\C[t]$-algebra structure and for which we have $x > y$. Let us denote the initial term of an element $\sigma \in \C[t][x, y]$ by $\init_{>} \sigma$. The Rees algebra $\rees(\bar{\chi})$ coincides with the initial algebra of $A$ defined by
\begin{equation*}
\init_{>} A = \{ \init_{>} \sigma \, : \, \sigma \in A\, \}.
\end{equation*}
We show that $\init_{>} A$ is not finitely generated. The proof follows closely the original argument in \cite{robbiano} Example 1.20.  

\emph{Claim 1. The algebra $A$ contains all the monomials of the form $t^{n-1} x y^n$ for $n \geq 3$, and does not contain elements which have a homogeneous component of the form $t^k x y^n$ for $k < n - 1$. In particular no element of $A$ can have initial term of the form $t^k x y^n$ for $k < n - 1$.}  To check the first statement we observe that we have for $n \geq 3$
\begin{equation*}
t^{n - 1} x y^n = t (x + y) t^{n - 2} x y^{n - 1} - t (x y) t (t^{n - 3} x y^{n - 2})
\end{equation*}
and then argue by induction starting from the fact that $A$ contains the monomials $t (x + y), t x y, t x y^2$. For the second statement it is enough to check that $A$ does not contain $t^k x y^n$ for $k < n - 1$ (since $A$ is a graded subalgebra). This is a simple check.

\emph{Claim 2. The algebra $A$ does not contain elements which have a homogeneous component of the form $t^{k} y^j$ for $k \leq j$. In particular no element of $A$ can have initial term of the form $t^{k} y^j$ for $k \leq j$}. Since $A$ is a graded subalgebra it is enough to show that $t^{k} y^j$ cannot belong to $A$ if $k \leq j$. All the elements of $A$ are of the form $f(t(x+ y), t xy, t xy^2, t^2 y)$ where $f(x_1, x_2, x_3, x_4)$ is a polynomial with coefficients in $\C[t]$. Assuming 
\begin{equation*}
f(t(x+ y), t xy, t xy^2, t^2 y) = t^k y^j
\end{equation*}  
and setting $y = 0$ gives $f(t x, 0, 0, 0) = 0$. Similarly setting $x = 0$ gives $f(t y, 0, 0, t^2 y)= t^k y^j$. If $k \leq j$ it follows that necessarily $k = j$ and $f(x_1, 0, 0, x_2) = x_1$. Comparing with $f(t x, 0, 0, 0)$ we find $t x = 0$, a contradiction. 

\emph{Claim 3. $\init_{>} A$ is not finitely generated.} Assuming $\init_{>} A$ is finitely generated we can find a finite set $\sigma_i$ of elements of $A$ such that $\init_> \sigma_i$ generate $\init_> A$. By finiteness we can choose $m \gg 1$ such that for all $i$ we have $\init_> \sigma_i \neq t^{m-1} x y^m$. On the other hand by Claim 1 we know that for all $m$ we have $t^{m-1} x y^m \in \init_> A$. By the definition of a term ordering we know thus that $t^{m-1} x y^m$ must be a product of powers of initial terms of the elements $\sigma_i$. As $x$ appears linearly it follows that there must be two generators $\sigma_i$, $\sigma_j$ with $\init_{>} \sigma_i = t^p x y^{r}$, respectively $\init_{>} \sigma_j = t^q y^s$ with $p + q = m - 1$, $r + s = m$. By Claim 1 we must have $p \geq r - 1$ and by Claim 2 we must have $q > s$. Hence $p + q > r + s - 1 = m - 1$ so $p + q \geq m$, a contradiction.

\end{proof}

\end{document}